\newtheorem{theo}{Theorem}
\newtheorem{lemma}[theo]{Lemma}
\newtheorem{prop}[theo]{Proposition}
\newcommand\be{\begin{equation}}
\newcommand\ee{\end{equation}}
\begin{document}
\title{On the $L^q$-dimensions of measures on \\ Hueter-Lalley type  self-affine sets}
\author{Jonathan M. Fraser$^1$ \& Tom Kempton$^2$ \\ \\
\emph{$^1$School of Mathematics,} \\ \emph{The University of Manchester,} \\ \emph{Manchester, M13 9PL, UK} \\ \emph{jon.fraser32@gmail.com}\\ \\
\emph{$^2$School of Mathematics,} \\ \emph{The University of St Andrews,} \\ \emph{St Andrews, KY16 9SS, UK} \\ \emph{tmwk@st-andrews.ac.uk}\\
 }
\maketitle
\abstract{\noindent  We study the $L^q$-dimensions of \emph{self-affine measures} and the \emph{K\"aenm\"aki measure} on a class of self-affine sets similar to the class considered by Hueter and Lalley. We give simple, checkable conditions under which the $L^q$-dimensions are equal to the value predicted by Falconer for a range of $q$. As a corollary this gives a wider class of self-affine sets for which the Hausdorff dimension can be explicitly calculated. Our proof combines the potential theoretic approach developed by Hunt and Kaloshin with recent advances in the dynamics of self-affine sets.\\ 



\emph{Mathematics Subject Classification} 2010: 28A80, 28A78, 37C45.\\
\emph{Key words and phrases}: self-affine set, self-affine measure, K\"aenm\"aki measure, $L^q$-dimensions, affinity dimension,  potential theoretic method.}

\setlength{\parskip}{.5cm}\setlength{\parindent}{0cm}

\section{Introduction}

Self-affine sets are natural objects of interest in fractal geometry, and have been intensively studied since the early 1980s. While the dimension theory of non-overlapping self-similar sets is well understood, significant open problems remain in the self-affine case. Indeed, until recently the only classes of self-affine sets for which it was possible to calculate the Hausdorff dimension were those with a `carpet structure' or those belonging to a class defined by Hueter and Lalley \cite{HuLal}.

Despite this lack of progress in the dimension theory of given self-affine sets and measures, the properties of typical self-affine sets and measures are much better understood.  Falconer defined the notion of {\it affinity dimension} and showed that for typical self-affine sets the affinity and Hausdorff dimensions coincide, provided the transformations are sufficiently contractive; see below for a formal statement.

In the last year new dynamical approaches have been developed to study the dimensions of self-affine sets, stemming from work of B\'ar\'any \cite{barany} and Falconer-Kempton  \cite{falconerkempton}. This work has been used to describe the Hausdorff dimension of a wide range of self-affine sets and measures \cite{rapaport, morrisshmerkin, baranykaenmaki}, as well as describing the Gibbs properties of natural self-affine measures, understanding projections of self-affine measures \cite{falconerkempton2}, and understanding the scenery flow \cite{kempton}.

In this article we turn our attention to the $L^q$-dimensions of self-affine measures; which provide finer information on the multifractal structure of the measure. The $L^q$-dimensions of self-affine measures have been studied intensively in various contexts, see \cite{kennethlq, kennethlq2, barralfeng,fengaffine, fraser, king, sponges, flexed}. Using dynamical systems defined in \cite{barany, falconerkempton} we give an explicit class of self-affine measures, based on the class defined by Hueter and Lalley, for which the $L^q$-dimensions can be computed for a range of $q$. We also give new results on the Hausdorff dimension of self-affine sets by extending the class of systems considered in \cite{HuLal}.

The main idea behind our proof goes as follows. We will be dealing with affine maps $T_i$ which contract different line segments by different amounts depending on the orientation of the line segment. First we express the problem of calculating $L^q$-dimensions in terms of bounding energy integrals, as is standard. We then ask how much a linear map $T$ contracts a line at angle $\theta$, and introduce a quantity $r^q_s(\theta)$ in Section \ref{mainproofsection} which averages over all concatenations $T_{\underline a}=T_{a_1}\circ\cdots\circ T_{a_n}$ the amount that $T_{\underline a}$ contracts lines at angle $\theta$. 

We show that proving $r^q_s(\theta)$ is uniformly bounded in $\theta$ implies that the relevant energy integrals are finite. Finally, using the dynamical systems of \cite{barany, falconerkempton} we give conditions under which the $r^q_s(\theta)$ are uniformly bounded, completing the proof. Our key idea here is to use `bunching conditions' and some degree of separation in a related iterated function system on projective space to show that, for any angle $\theta$, only a small proportion of the maps $T_{a_1\cdots a_n}$ are strongly contractive on lines at angle $\theta$.


\section{$L^q$-dimensions}

Let $\mu$ be a compactly supported Borel probability measure on $\mathbb{R}^d$.  The $L^q$-dimensions of $\mu$ give a coarse global description of the fluctuation of the measure on small scales and have many applications including in multifractal analysis and information theory.  For $q \geq 0 \, (q \neq 1)$ and $\delta>0$, let $\mathcal{M}_\delta$ be the set of closed $\delta$-mesh cubes imposed on $\mathbb{R}^d$ oriented and aligned with the coordinate axes and let
\[
M^q_\delta(\mu) \ = \ \sum_{Q \in \mathcal{M}_\delta} \mu(Q)^q,
\]
with the convention that $0^0 = 0$.  Then the upper and lower $L^q$-dimensions of $\mu$ are defined by
\[
\overline{D}^q(\mu) = \limsup_{\delta \to 0} \frac{\log M^q_\delta(\mu) }{(q-1) \log \delta}
\]
and
\[
\underline{D}^q(\mu) = \liminf_{\delta \to 0} \frac{\log M^q_\delta(\mu) }{(q-1) \log \delta}
\]
respectively.  If the two values coincide we denote the common value by $D^q(\mu)$ and refer to it as the $L^q$ dimension of $\mu$.  Note that $\underline{D}^0(\mu) $ and $\overline{D}^0(\mu) $ are the lower and upper box dimensions of the support of $\mu$ and $\underline{D}^2(\mu) $ and $\overline{D}^2(\mu) $ are the lower and upper correlation dimensions of $\mu$. We define $\overline{D}^1(\mu) $ and $\underline{D}^1(\mu) $ to be the upper and lower information dimensions of $\mu$, given by
\[
\overline{D}^1(\mu) = \limsup_{\delta \to 0} \frac{ \sum_{Q \in \mathcal{M}_\delta} \mu(Q) \log\mu(Q) }{ \log \delta}
\]
and
\[
\underline{D}^1(\mu) = \liminf_{\delta \to 0} \frac{ \sum_{Q \in \mathcal{M}_\delta} \mu(Q) \log\mu(Q) }{ \log \delta}
\]
respectively.  It is straightforward to see that the upper and lower $L^q$-dimensions are both non-increasing in $q \geq 0$ and continuous, except possibly at $q=1$. Therefore they are uniformly bounded above by $\overline{D}^0(\mu)$ and $\underline{D}^0(\mu)$, respectively, which are just the upper and lower box-counting dimensions of the support of $\mu$.

Lower bounds for the $L^q$-dimensions are given by energy integrals. It was proved in \cite[Proposition 2.1]{huntkaloshin} that if for some $q>1$ there exists $s=s(q)>0$ such that
\[
\mathcal{I}_s^q(\mu) \ := \ \int\left(\int \frac{d\mu(x)}{\lvert x-y \rvert^s}\right)^{q-1}d\mu(y) \ < \ \infty
\]
then $\underline{D}^q(\mu) \geq s$.

\subsection{Measures on self-affine sets}


For $\Lambda$ a finite set and $i\in\Lambda$ let $A_i$ be non-singular linear contractions on $\mathbb R^2$ (corresponding to $2\times 2$ matrices), and $t_i\in\mathbb R^2$ be translation vectors. Setting $T_i = A_i+t_i$, there is a unique non-empty compact set $F$ satisfying
\[
F = \bigcup_{i\in \Lambda} T_i(F).
\]
The set $F$ is called the \emph{self-affine} attractor of the IFS.  The \emph{singular values} of a $2 \times 2$ matrix $A$ are the positive square roots of the eigenvalues of $A^T A$.  Geometrically these numbers represent the lengths of the semi-axes of the image of the unit ball under $A$.  Thus the singular values correspond to how much the map contracts in different directions.  


For $\underline{a} \in \Sigma:=\Lambda^{\mathbb N}$, let $\underline{a}\vert_k \in \Lambda^k$ be the restriction of $\underline{a}$ to its first $k$ coordinates. We write
\[
T_{\underline a\vert_k} = T_{a_1} \circ \cdots \circ T_{a_k}.
\]


For $\underline a = a_1 \dots a_k \in \Lambda^k$ let  $1>\alpha_1(\underline a) \geq \alpha_2(\underline a)>0$ be the singular values of the linear part of $T_{\underline a}$. For $s \in [0,2]$ the \emph{singular value function} $\phi^s(\underline a)$ is given by
\[
\phi^s(\underline a) = \left\{ \begin{array}{cc}
\alpha_1^s(\underline a) & s \leq 1 \\
\alpha_1(\underline a)\alpha_2^{s-1}(\underline a) & s > 1 \\
\end{array} \right.
\]
Falconer defined the \emph{affinity dimension}, which depends only on the linear  parts of the maps in the IFS, by
\begin{equation} \label{affinitydim}
d = d (\{A_i\}_{i\in \Lambda}) = \inf \bigg\{ s: \sum_{k=1}^\infty \sum_{\underline{a} \in \Lambda^k}  \phi^s ( \underline{a} ) < \infty \bigg\}.
\end{equation}
It is always an upper bound for the Hausdorff and box dimensions of $F$, regardless of the choice of translations, and if one randomises the translations, then it is typically equal to the Hausdorff dimension provided the transformations $T_i$ are sufficiently contractive, see \cite{affine}.



Let $\mu$ be any Borel probability measure on $\Sigma$. We also let $\mu = \mu \circ \Pi^{-1}$ be the associated measure on the self-affine set $F$; whether we mean the symbolic or geometric $\mu$ will be clear from the context. In \cite{kennethlq} Falconer introduced a function $d(q)$ which is always an upper bound for the upper $L^q$-dimensions of a measure and in many cases gives the precise value.  This can be viewed as an extension of the of the affinity dimension which applies to measures on self-affine sets.

For $q>1$, let
\[
d(q) = \sup \left\{ s  \in \mathbb{R}^+ : \sum_{k=1}^\infty \sum_{\underline{a} \in \Lambda^k} \phi^s(\underline{a})^{1-q} \mu([\underline{a}])^q < \infty\right\}.
\]
Falconer \cite[Theorem 7.2]{kennethlq} shows that for all $q>1$ we have $\overline{D}^q(\mu) \leq \min \{d(q), 2\}$, provided that $\mu$ is a \emph{Gibbs measure}.  The measures we consider in this paper will be Gibbs, we refer the reader to \cite{kennethlq} for the definition of a Gibbs measure.

For a self-affine set, the `natural' measure from a dimension theoretic point of view is one which measures cylinders like the singular value function.  Thus the `weight' of a cylinder is close to its `geometric size'.  K\"aenm\"aki proved that such measures exist and, moreover, in our setting can be taken to be ergodic Gibbs measures.

\begin{prop}\label{muProp}
There exists an ergodic shift-invariant probability measure $\mu$ on $\Sigma$ and a constant $C_1$ such that
\[
\frac{1}{C_1} \phi^s(a_1\cdots a_n)\leq \mu[a_1\cdots a_n]\leq C_1  \phi^s(a_1\cdots a_n).
\]
Furthermore, there exists a constant $C_2$ such that
\[\frac{1}{C_2}\mu[a_1\cdots a_m]\mu[a_{m+1}\cdots a_n]\leq\mu[a_1\cdots a_n]\leq C_2\mu[a_1\cdots a_m]\mu[a_{m+1}\cdots a_n]
\]
for all sequences $\underline a\in\Sigma$ and $1\leq m\leq n$.
\end{prop}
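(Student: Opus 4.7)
The plan is to build $\mu$ by invoking K\"aenm\"aki's subadditive thermodynamic formalism applied to the singular value function at the value $s=d$ where $d$ is the affinity dimension of \eqref{affinitydim}. The first step is to verify that $\phi^s$ is submultiplicative on the free semigroup $\Lambda^*$: for $s\in[0,1]$ this reduces to submultiplicativity of the operator norm $\alpha_1$ under matrix multiplication, while for $s\in[1,2]$ one rewrites $\phi^s(A)=\alpha_1(A)^{2-s}|\det A|^{s-1}$ so that $2-s\in[0,1]$ and the submultiplicativity of $\alpha_1$ combined with the multiplicativity of $\det$ yields $\phi^s(AB)\leq \phi^s(A)\phi^s(B)$. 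Fekete's lemma then produces a well-defined pressure
\[
P(s)=\lim_{n\to\infty}\frac{1}{n}\log\sum_{|\underline a|=n}\phi^s(\underline a),
\]
which is continuous and strictly decreasing on its positive range, and the affinity dimension $d$ is precisely its zero.

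Next I would follow K\"aenm\"aki and take a weak-$*$ limit of the normalised measures proportional to $\sum_{|\underline a|=n}\phi^s(\underline a)\mathbf 1_{[\underline a]}$, producing a shift-invariant Borel probability measure $\mu$ on $\Sigma$ for which submultiplicativity of $\phi^s$ together with a Chebyshev-type argument gives the \emph{upper} Gibbs bound $\mu[\underline a]\leq C_1 \phi^s(\underline a)$, and hence also the upper half of the quasi-Bernoulli inequality by applying this to $\underline a_1\cdots \underline a_n$ and using shift invariance against the lower bound (once obtained).

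The lower Gibbs bound and the full quasi-Bernoulli property both rest on \emph{quasi-multiplicativity} of $\phi^s$: the existence of a finite set of words $\mathcal W\subset \Lambda^*$ and a constant $c>0$ such that for every pair of finite words $\underline a,\underline b$ there exists $\underline w\in\mathcal W$ with
\[
\phi^s(\underline a\,\underline w\,\underline b)\geq c\,\phi^s(\underline a)\,\phi^s(\underline b).
\]
Given this property, a standard pigeonholing argument (pairing each word $\underline a\,\underline b$ with the enriched word $\underline a\,\underline w\,\underline b$ and invoking the upper Gibbs bound plus shift invariance) extracts both the lower Gibbs bound and the lower quasi-Bernoulli estimate. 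Ergodicity (in fact mixing) then follows automatically from the two-sided quasi-Bernoulli inequality by a standard approximation of cylinder functions.

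The main obstacle, and essentially the only non-formal step, is verifying quasi-multiplicativity of $\phi^s$. This is a theorem of Feng and K\"aenm\"aki under strong irreducibility of the matrix tuple $\{A_i\}_{i\in\Lambda}$ (no finite union of proper subspaces is simultaneously invariant under all $A_i$), which will be part of the Hueter-Lalley type standing hypotheses of this paper. With quasi-multiplicativity in hand the construction above delivers the measure $\mu$ with all three asserted properties.
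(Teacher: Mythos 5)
The paper itself offers no proof of this proposition: it is quoted directly from K\"aenm\"aki \cite{kaenmaki} (existence and the Gibbs bounds) and B\'ar\'any--Rams \cite{BaranyRams} (the quasi-Bernoulli property), so your sketch should be judged as a reconstruction of those references. Most of it is the standard route and is fine: submultiplicativity of $\phi^s$ via $\phi^s(A)=\alpha_1(A)^{2-s}\lvert\det A\rvert^{s-1}$ for $s\in[1,2]$, Fekete's lemma for the pressure, the weak-$*$ limit construction, and the upper Gibbs bound from submultiplicativity are all correct.

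There is, however, one genuine gap at the final step. Quasi-multiplicativity in the Feng--K\"aenm\"aki form, $\phi^s(\underline a\,\underline w\,\underline b)\geq c\,\phi^s(\underline a)\phi^s(\underline b)$ for some connecting word $\underline w$ from a finite set, is what strong irreducibility buys you, and it does suffice for the lower Gibbs bound $\mu[\underline a]\geq C_1^{-1}\phi^s(\underline a)$. But it does \emph{not} give the lower quasi-Bernoulli inequality $\mu[a_1\cdots a_n]\geq C_2^{-1}\mu[a_1\cdots a_m]\mu[a_{m+1}\cdots a_n]$: once you have the two-sided Gibbs property, that inequality is equivalent to $\phi^s(\underline a\,\underline b)\geq c\,\phi^s(\underline a)\phi^s(\underline b)$ with \emph{no} inserted word, i.e.\ to $\alpha_1(AB)\geq c\,\alpha_1(A)\alpha_1(B)$ for all products from the semigroup. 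Under mere (strong) irreducibility this fails: $\alpha_1(AB)$ can be as small as roughly $\alpha_1(A)\alpha_2(B)$ when the contracting directions align badly, so the ratio $\mu[\underline a\,\underline b]/(\mu[\underline a]\mu[\underline b])$ need not be bounded below. The missing ingredient is exactly the standing Positivity assumption (P) of this paper: for strictly positive matrices the images of the positive cone have uniformly bounded projective diameter, which yields $\alpha_1(AB)\geq c\,\alpha_1(A)\alpha_1(B)$ (and, via multiplicativity of the determinant, the two-sided comparison for $\alpha_2$ as well), hence genuine quasi-multiplicativity of $\phi^s$ without connecting words. This is the content of the B\'ar\'any--Rams argument. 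So you should replace the appeal to strong irreducibility by an appeal to positivity/domination at this point; with that substitution your outline is complete.
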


The existence of this measure was proved by K\"aenm\"aki \cite{kaenmaki}, the fact that it is Gibbs was proved by  Barany and Rams \cite{BaranyRams}.

We also consider self-affine measures.  These are the push forwards of Bernoulli measures on $\Sigma$ via the coding map $\Pi$.  Alternatively, fix a positive probability vector $\{p(i)\}_{i \in \Lambda}$.  Then the associated self-affine measure is the unique Borel probability measure satisfying
\[
\mu = \sum_{i \in \Lambda} p(i) \, \mu \circ T_i^{-1}.
\]
Such measures are supported on $F$ but are typically singular with respect to the Hausdorff measure on $F$ and often exhibit a rich multifractal structure.



\section{Our setting and main results}

Given a $2\times 2$ matrix $A_i$ we let the corresponding projective linear transformation $\phi_i:\mathbb{PR}^1\to\mathbb{PR}^1$ be the map such that lines through the origin at angle $\theta$ are mapped to lines at angle $\phi_i(\theta)$ by the matrix $A_i^{-1}$. If $A_1,\cdots , A_k$ are strictly positive then the maps $\phi_i$ strictly contract the negative quadrant. 

Let $\{T_i\}_{i \in \Lambda}$ be an IFS of affine contractions acting on the plane  as described above and let $d$ be the affinity dimension associated with the linear parts. Also, let
\[
\gamma:= \exp \sup_{\theta\in \mathbb P\mathbb R^1} \limsup_{n\to\infty}\frac{1}{n}\log|\{c_1\cdots c_n:\theta\in\phi_{c_n\cdots c_1}(\mathcal Q_2)\}|.
\]
We will need the following assumptions:

(S) \textbf{Separation}: For all $i,j\in \Lambda$ with $i \neq j$ we have $T_i(F) \cap T_j(F) = \emptyset$.

(P) \textbf{Positivity}: The matrices $\{A_i\}_{i=1}^k$ are all strictly positive.

(B) \textbf{$(q-1)$-Bunching}:  For $q\geq 2$ the $(q-1)$-bunching condition is satisfied if for each  $i\in\Lambda$ one has
\[
\begin{array}{cc}
\gamma \alpha_1(i)^{qd} < \alpha_2(i)^{(q-1)d}
 \end{array}
\]
(MB) \textbf{$(q-1)$-Metric-Bunching}: If $\mu$ is a self-affine measure with weights $\{p(i)\}_{i \in \Lambda}$, then for $q\geq 2$ the $(q-1)$-metric-bunching condition is satisfied if for each  $i\in\Lambda$ one has
\[
\begin{array}{cc}
\gamma  \, p(i)^{q} < \alpha_2(i)^{(q-1)d(q)}.
 \end{array}
 \]

Although the definition of $\gamma$ is quite involved, it can be easily estimated to give conditions that are weaker but simple to check and state.  For example, $\gamma$ is always bounded above by the maximum number of first level intervals in the projective IFS which overlap a single point.  In many situations such as the class of self-affine sets studied by Hueter and Lalley, the IFS on projective space satisfies the appropriate version of strong separation (S) which renders $\gamma=1$.

The $(q-1)$-bunching condition becomes more restrictive as $q$ increases and if $q=2$ and $\gamma=1$, then our bunching condition returns the familiar \emph{1-bunched condition}.  

Finally, note that the conditions (S), (P) and (B) are conditions on the self-affine set, and make no reference to a particular measure.  Our results on the K\"aenm\"aki measure only rely on these conditions because the K\"aenm\"aki measure is chosen depending on the set.  The condition (MB) depends on the choice of self-affine measure.

\begin{theo}
Let $F$ be a self-affine set in the plane defined by an IFS satisfying Separation and Positivity, and let $\mu$ be the K\"aenm\"aki measure.  Assume that the affinity dimension $d \leq 1$ and that 
\[
q_0 := \sup\{q \geq 2 : \text{ the $(q-1)$-bunching condition is satisfied} \}
\]
is well-defined, i.e. the set of suitable $q \geq 2$ is nonempty. Then for all $q \in [0,q_0)$ we have
\[
D^q(\mu) = d = \dim_\mathrm{H} F.
\]
\end{theo}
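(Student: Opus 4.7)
My plan is to prove the two bounds $\underline{D}^q(\mu) \geq d$ and $\overline{D}^q(\mu) \leq d$ for every $q \in [0, q_0)$, with $\dim_\mathrm{H} F = d$ falling out of the lower-bound analysis at $q=2$. The upper bound is comparatively soft: for $q>1$, Falconer's theorem gives $\overline{D}^q(\mu) \leq d(q)$ (noting that the K\"aenm\"aki measure is Gibbs), and since $d\leq 1$ and Proposition~\ref{muProp} yields $\mu[\underline{a}] \asymp \phi^d(\underline{a}) = \alpha_1(\underline{a})^d$, a short calculation shows $d(q)=d$ for all $q>1$. For $q\in[0,1]$ the upper bound follows from monotonicity of $q\mapsto\overline{D}^q(\mu)$ together with $\overline{D}^0(\mu)=\overline{\dim}_\mathrm{B} F \leq d$, the standard estimate of the upper box dimension by the affinity dimension. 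The lower bound is the real work, and by the Hunt--Kaloshin criterion recalled in Section~2 it suffices to prove $\mathcal{I}_s^q(\mu)<\infty$ for every $s<d$ and every $q\in(1,q_0)$; monotonicity of $q\mapsto\underline{D}^q(\mu)$ then extends the resulting bound $\underline{D}^q(\mu)\geq d$ from $(1,q_0)$ to all $q\in[0,q_0)$.

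To bound $\mathcal{I}_s^q(\mu)$ symbolically, I would stratify pairs of codings $\underline{x},\underline{y}\in\Sigma$ by the last level at which they agree, apply the Gibbs estimate $\mu[\underline{a}]^q \asymp \alpha_1(\underline{a})^{qd}$ on each cylinder, and parameterise the inner integral by the contraction factor $c(\underline{a},\theta)$ of the linear part $A_{\underline{a}}$ on the line through the origin at angle $\theta$, where $\theta$ records the direction of separation inside the cylinder. Positivity ensures that the projective IFS $\{\phi_i\}$ strictly contracts the negative quadrant $\mathcal{Q}_2$, so these directions lie in a uniformly small projective region and the problem collapses to establishing a uniform-in-$\theta$ bound on the angular potential
\[
r_s^q(\theta)\;=\;\sum_{n\geq 0}\sum_{\underline{a}\in\Lambda^n}\alpha_1(\underline{a})^{qd}\,c(\underline{a},\theta)^{-s}
\]
announced in the introduction.

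The crux is the uniform bound on $r_s^q(\theta)$. I would split the inner sum according to whether the reverse projective orbit $\phi_{c_n\cdots c_1}(\mathcal{Q}_2)$ of the word $\underline{a}=c_1\cdots c_n$ avoids a small neighbourhood of $\theta$ (``good'' words, for which $c(\underline{a},\theta)\asymp\alpha_1(\underline{a})$) or contains $\theta$ (``bad'' words, for which $c(\underline{a},\theta)$ can be as small as $\alpha_2(\underline{a})$). The good contribution reduces to a tail of an affinity-dimension-type sum and converges for $s$ slightly below $d$, exploiting $q>1$ and $d\leq 1$. For the bad contribution, the definition of $\gamma$ caps the number of bad length-$n$ words at any fixed $\theta$ by essentially $\gamma^n$, and positivity supplies the quasi-multiplicativity of singular values on the positive cone, so the sum over bad words at scale $n$ is dominated by $\bigl(\gamma\max_i\alpha_1(i)^{qd}\alpha_2(i)^{-(q-1)d}\bigr)^n$ at the critical exponent. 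The $(q-1)$-bunching hypothesis $\gamma\alpha_1(i)^{qd}<\alpha_2(i)^{(q-1)d}$ is precisely what forces this ratio strictly below $1$ for each $i$, and a perturbation argument transfers summability from the critical exponent to all $s<d$. Specialising to $q=2$, which lies in $(1,q_0)$ by hypothesis, gives $\mathcal{I}_s^2(\mu)<\infty$ for every $s<d$, hence $\dim_\mathrm{H}\mu\geq d$ by the classical energy principle, hence $\dim_\mathrm{H} F\geq d$; the reverse inequality $\dim_\mathrm{H} F\leq d$ is Falconer's general upper bound.

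The principal obstacle is the uniform-in-$\theta$ bookkeeping in the bad-word analysis: the bound $\gamma^n$ on the count of bad words is only asymptotic and carries subexponential slack that must be absorbed without eating up the strict gap produced by the bunching condition, and the passage from singular values of the concatenation $\underline{a}$ to products over the individual letters $c_i$ is only quasi-multiplicative in general. This is where positivity does its real work: strictly positive $2\times 2$ matrices act with bounded distortion on the positive cone, equivalently the projective IFS has bounded distortion on an invariant compact subset of $\mathbb{PR}^1$, and it is this bounded distortion that converts the projective ``most orbits miss $\theta$'' statement into a clean geometric series whose ratio is governed exactly by the bunching exponent.
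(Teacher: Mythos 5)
Your overall strategy coincides with the paper's: Falconer's upper bound together with the observation that $d(q)=d$ for the K\"aenm\"aki measure, the Hunt--Kaloshin energy criterion for the lower bound, reduction to a uniform-in-$\theta$ bound on an angular potential $r_s^q(\theta)$, and a split of words according to whether their projective orbits meet $\theta$, with the bunching condition producing a convergent geometric series. However, there is a genuine gap in your treatment of the ``good'' words. You classify a word $\underline{a}=c_1\cdots c_n$ as good when $\phi_{c_n\cdots c_1}(\mathcal{Q}_2)$ misses $\theta$ and assert that then $c(\underline{a},\theta)\asymp\alpha_1(\underline{a})$. This is false: a word can fail to code $\theta$ at level $n$ while possessing a long prefix $c_1\cdots c_m$ that does code $\theta$, and for such a word the contraction on lines at angle $\theta$ is of order $\alpha_2(c_m\cdots c_1)\,\alpha_1(c_n\cdots c_{m+1})$, which (by quasi-multiplicativity of $\alpha_1$ on the positive cone) can be smaller than $\alpha_1(\underline{a})$ by a factor $\alpha_2(c_m\cdots c_1)/\alpha_1(c_m\cdots c_1)$ decaying exponentially in $m$. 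Since the contraction enters the potential with a negative exponent, your good-word bound underestimates the sum. The paper's proof avoids this by decomposing every word at the \emph{maximal} coding prefix, $a_n\cdots a_1=e_n\cdots e_{m+1}c_m\cdots c_1$ with $m$ the largest index for which $\theta\in\phi_{a_1\cdots a_m}(\mathcal{Q}_2)$, and then using the quasi-Bernoulli property of the K\"aenm\"aki measure to factor the resulting double sum into a product of a sum over coding prefixes (controlled by the bunching condition, using $\lambda\geq\alpha_2$) and a sum over arbitrary suffixes (controlled by the definition of $d(q)$, using $\lambda\asymp\alpha_1$, which is legitimate there because the rotated angle has left $\mathcal{Q}_2$). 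This factorisation, not the binary classification, is what makes the argument close.

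Two smaller points. First, the exponent in your displayed $r_s^q(\theta)$ should be $c(\underline{a},\theta)^{-s(q-1)}$ rather than $c(\underline{a},\theta)^{-s}$; the Hunt--Kaloshin reduction places $d(\cdot)^{-s(q-1)}$ in the denominator, and only with that exponent does the critical ratio match the $(q-1)$-bunching condition you later invoke. Second, the reduction of $\mathcal{I}_s^q(\mu)$ to the symbolic double sum is not merely ``stratify by the last level of agreement'': for $q>2$ the outer power $q-1$ must be pushed past an infinite sum over $n$ and an inner integral, which the paper achieves by inserting geometric weights $\beta^n$, applying Jensen's inequality, and then removing the weights at the cost of slightly increasing $s$. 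That convexity argument requires $q\geq 2$, so your claim to treat all $q\in(1,q_0)$ directly should be replaced by treating $q\in[2,q_0)$ and invoking monotonicity of $q\mapsto\underline{D}^q(\mu)$ for $q<2$, as the paper does and as you in any case do elsewhere.
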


In particular, setting $q=2$ in this theorem gives new conditions under which the Hausdorff and affinity dimension of the set $F$ agree.  These conditions are a generalisation of those of Hueter and Lalley since we allow overlaps in the IFS on projective space, which Hueter and Lalley do not allow.

We observe that $(q-1)$-bunching is an `open condition' (in $q$) and that the 1-bunched condition implies the $L^q$-dimensions are constantly equal to $d$ in an $[0, q_0)$ containing 2.  This is noteworthy because it implies the function $D^q(\mu)$ is differentiable in this interval, which is an important property in the context of dimension theory. For example, in general being differentiable at $q=1$ it guarantees that the measure is exact dimensional, see \cite{ngai}. 

\begin{theo}
Let $\mu$ be a self-affine measure in the plane defined by an IFS satisfying Separation and Positivity.   Also assume that
\[
q_0 := \sup\{q \geq 2 : \text{ the $(q-1)$-metric-bunching condition is satisfied} \}
\]
is well-defined, i.e. the set of suitable $q \geq 2$ is nonempty. Then for all $q \in [2,q_0)$ we have
\[
D^q(\mu) = d(q).
\]
\end{theo}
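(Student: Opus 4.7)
The strategy mirrors the proof of Theorem 3.1: the K\"aenm\"aki cylinder estimate $\mu([\underline a]) \asymp \phi^d(\underline a)$ is replaced by the Bernoulli equality $\mu([\underline a]) = p(\underline a)$, the affinity dimension $d$ is replaced by $d(q)$, and the bunching condition is replaced by metric-bunching. The upper bound $\overline D^q(\mu) \leq d(q)$ follows immediately from Falconer's Theorem 7.2 of \cite{kennethlq}, since self-affine Bernoulli measures are Gibbs, giving $\overline D^q(\mu) \leq \min\{d(q), 2\} \leq d(q)$.

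For the lower bound, fix $q \in [2, q_0)$ and $s < d(q)$. By Proposition 2.1 of \cite{huntkaloshin}, it is enough to prove that the energy integral $\mathcal I_s^q(\mu) < \infty$, whereupon letting $s \nearrow d(q)$ yields $\underline D^q(\mu) \geq d(q)$. As in Section \ref{mainproofsection}, for each word $\underline a \in \Lambda^n$ and each direction $\theta \in \mathbb{PR}^1$ I write $\beta(\underline a, \theta)$ for the length of the image under $T_{\underline a}$ of a unit segment at angle $\theta$, so that $\alpha_2(\underline a) \leq \beta(\underline a, \theta) \leq \alpha_1(\underline a)$. I then introduce the directional potential
\[
r^q_s(\theta) \;=\; \sum_{n \geq 1} \sum_{\underline a \in \Lambda^n} p(\underline a)^q \, \psi^s(\underline a, \theta),
\]
where $\psi^s(\underline a, \theta)$ is the $\beta$-dependent analogue of the singular value function $\phi^s$, defined case-by-case according to whether $s \leq 1$ or $s > 1$. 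A geometric decomposition---conditioning on the deepest cylinder containing a pair $(x,y) \in F \times F$ and on the direction of $y - x$---reduces $\mathcal I_s^q(\mu) < \infty$ to the uniform bound $\sup_\theta r^q_s(\theta) < \infty$. This reduction depends only on the cylinder measures and proceeds exactly as for the K\"aenm\"aki case, with $p(\underline a)^q$ playing the role of $\mu([\underline a])^q$.

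The main obstacle, and where metric-bunching enters, is the uniform bound $\sup_\theta r^q_s(\theta) < \infty$. For fixed $\theta$, $\beta(\underline a, \theta)$ is close to $\alpha_2(\underline a)$ precisely when the backwards projective orbit $\phi_{a_n \cdots a_1}(\mathcal Q_2)$ lands near $\theta$, and by definition of $\gamma$ this occurs for at most $\gamma^{n + o(n)}$ words $\underline a \in \Lambda^n$. I split the inner sum accordingly. The ``good'' words, on which $\beta(\underline a, \theta) \asymp \alpha_1(\underline a)$, contribute a sum comparable to $\sum_{\underline a} p(\underline a)^q \phi^s(\underline a)^{1-q}$, which is finite for $s < d(q)$ by definition of $d(q)$. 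The ``bad'' words contribute at most $\gamma^n$ times a factor of $p(\underline a)^q$ multiplied by negative powers of $\alpha_2(\underline a)$; the metric-bunching inequality $\gamma p(i)^q < \alpha_2(i)^{(q-1)d(q)}$, applied letter-by-letter along $\underline a = a_1 \cdots a_n$, makes this contribution geometrically summable once $s$ is sufficiently close to $d(q)$. Combining the two bounds delivers the uniform control on $r^q_s(\theta)$ and closes the proof.
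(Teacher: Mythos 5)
Your overall architecture coincides with the paper's: Falconer's Theorem 7.2 of \cite{kennethlq} for the upper bound, the Hunt--Kaloshin energy integral for the lower bound, reduction of $\mathcal I_s^q(\mu)<\infty$ to a uniform bound on a directional potential $r_s^q(\theta)$ (this is Lemma~\ref{keyestimate} together with the separation constant $m>0$), and a split of words according to whether they code $\theta$ in the projective IFS, with $\gamma$ counting the codings and metric-bunching controlling their contribution. However, the step where you actually bound $r_s^q(\theta)$ has a genuine gap. You partition the level-$n$ words into ``bad'' words (those with $\theta\in\phi_{a_1\cdots a_n}(\mathcal Q_2)$, at most $\gamma^{n+o(n)}$ of them) and ``good'' words, claiming that on good words $\beta(\underline a,\theta)\asymp\alpha_1(\underline a)$ uniformly. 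This dichotomy is false: a word $a_n\cdots a_1=e_n\cdots e_{m+1}c_m\cdots c_1$ whose inner block codes $\theta$ to depth $m$ but whose outer block does not is ``good'' at level $n$, yet its directional contraction is of order $\alpha_1(e_n\cdots e_{m+1})\,\alpha_2(c_m\cdots c_1)$, which falls short of $\alpha_1(a_n\cdots a_1)$ by a factor of order $\alpha_2(c_m\cdots c_1)/\alpha_1(c_m\cdots c_1)$, and this tends to $0$ as $m\to\infty$. Consequently the good-word contribution is not comparable to $\sum_{\underline a}p(\underline a)^q\phi^s(\underline a)^{1-q}$, and your two-part bound does not close.

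The paper repairs exactly this point. Each word is decomposed at the \emph{largest} $m$ for which $\theta\in\phi_{a_1\cdots a_m}(\mathcal Q_2)$; the cocycle identity $\lambda_{a_n\cdots a_1}(\theta)=\lambda_{e_n\cdots e_{m+1}}(\phi_{c_1\cdots c_m}^{-1}(\theta))\,\lambda_{c_m\cdots c_1}(\theta)$ is applied; and, using the multiplicativity of the Bernoulli weights, the whole sum factorises into a \emph{product} of two independent series: one over coding prefixes, which sums geometrically via $\gamma\, p(i)^q<\alpha_2(i)^{(q-1)s}$, and one over non-coding suffixes, where Lemma~\ref{C3} legitimately gives $\lambda\asymp\alpha_1$ and the definition of $d(q)$ applies. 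You need this maximal-prefix factorisation (or an equivalent device) rather than a flat good/bad partition at each level. Two smaller points: the energy integral produces the single directional contraction $\beta(\underline a,\theta)^{-s(q-1)}$, so there is no role for a two-singular-value analogue $\psi^s$ with a case split at $s=1$; and, as the paper notes, in the Bernoulli case the argument runs for any $s\in(0,2]$, so no restriction analogous to $d\leq 1$ is needed.
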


Again, note that $(q-1)$-metric-bunching is an `open condition' in $q$.  It is of interest to determine situations where our bunching conditions hold.  In the case of Bernoulli measures, one may turn (MB) into a condition on the set by asking for which self-affine sets does there exist a self-affine measure which satisfies (MB)?  This condition is easily seen to be equivalent to
\[
\sum_{i \in \Lambda} \left(\frac{\alpha_2(i)^{(q-1)d(q)}}{\gamma}\right)^{1/q} > 1.
\]
This is an easily checked condition, and in the setting of Hueter-Lalley it is always satisfied for some $q_0$.

\begin{prop}
Let $F$ be a planar self-affine set satisfying Separation and Positivity.  Also assume that $\gamma=1$, $d \leq 1$ and the 1-bunching condition is satisfied.  Then there exists a self-affine measure $\mu$ on $F$ and some $q_0>2$ such that for all $q \in [2,q_0)$ we have
\[
D^q(\mu) = d(q).
\]
\end{prop}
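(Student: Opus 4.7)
The plan is to apply Theorem 3.2: it suffices to exhibit a single self-affine (Bernoulli) measure $\mu$ on $F$ for which the $(q-1)$-metric-bunching condition holds at some $q \geq 2$, since the openness of (MB) in $q$ built into the hypotheses of Theorem 3.2 then supplies the interval $[2, q_0)$ automatically.

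I would take $\mu$ to be the Bernoulli self-affine measure associated with the probability weights
\[
p(i) \;=\; \frac{\alpha_1(i)^{d}}{Z}, \qquad Z \;=\; \sum_{j \in \Lambda} \alpha_1(j)^{d},
\]
where $d$ is the affinity dimension (and $d \leq 1$ so that $\phi^d(j) = \alpha_1(j)^d$). Under Positivity, the matrices $A_i$ all preserve a common cone, which gives quasi-multiplicativity of $\alpha_1$ along words; a standard pressure computation then yields $d(q;\mu) = d$ for every $q \geq 1$. One also has $Z \geq 1$: by submultiplicativity of $\alpha_1$, $\sum_{\underline a \in \Lambda^k} \alpha_1(\underline a)^{d} \leq Z^k$, while the definition of the affinity dimension forces the exponential growth rate of the left-hand side to equal $1$.

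With these two ingredients in hand, the (MB) condition at $q = 2$ with $\gamma = 1$ reads
\[
p(i)^{2} \;<\; \alpha_2(i)^{d(2)} \;=\; \alpha_2(i)^{d},
\]
equivalently $\alpha_1(i)^{2d} < Z^{2}\,\alpha_2(i)^{d}$, which is immediate from $Z \geq 1$ together with the $1$-bunching hypothesis $\alpha_1(i)^{2d} < \alpha_2(i)^{d}$. Thus $\mu$ satisfies $1$-metric-bunching, and applying Theorem 3.2 to $\mu$ delivers some $q_0 > 2$ and the identity $D^q(\mu) = d(q)$ on $[2, q_0)$.

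The main technical point is the pressure identity $d(q;\mu) = d$: one must check that the multiplicative constants arising from quasi-multiplicativity of $\alpha_1$ are absorbed into lower-order terms and do not shift the critical exponent of the series defining $d(q)$. This is a standard Gibbs/thermodynamic formalism calculation in the positive-matrix setting, but it is where essentially all of the careful bookkeeping would live.
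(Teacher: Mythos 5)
Your strategy is essentially the paper's: exhibit Bernoulli weights satisfying (MB) at $q=2$ and invoke the second theorem together with the openness of (MB) in $q$. The paper phrases this as the summability criterion $\sum_{i}\alpha_2(i)^{d(2)/2}>1$ and verifies it by exactly the chain you use -- the $1$-bunching inequality, the bound $d(2)\leq d$, and $\sum_i\alpha_1(i)^d\geq 1$ from submultiplicativity of $\alpha_1$ -- so your $Z\geq 1$ step and your final inequality $p(i)^2\leq\alpha_1(i)^{2d}<\alpha_2(i)^d\leq\alpha_2(i)^{d(2)}$ reproduce the paper's argument with an explicit choice of weights in place of the paper's implicit one. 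That part is fine.

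The one point to flag is your claimed identity $d(q;\mu)=d$ for the weights $p(i)=\alpha_1(i)^d/Z$, which you single out as the main technical burden. It is neither needed nor safely true. It is not needed because the only place you use it is to replace $\alpha_2(i)^{d(2)}$ by $\alpha_2(i)^{d}$, and for that the inequality $d(2)\leq d$ suffices (since $\alpha_2(i)<1$, a smaller exponent only makes $\alpha_2(i)^{d(2)}$ larger); and $d(q)\leq d$ holds for \emph{every} measure by Jensen's inequality, since
\[
\sum_{\underline a\in\Lambda^k}\phi^s(\underline a)^{1-q}\mu[\underline a]^q\;\geq\;\Bigl(\sum_{\underline a\in\Lambda^k}\phi^s(\underline a)\Bigr)^{1-q},
\]
which diverges in $k$ whenever $s>d$. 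It is not safely true because positivity gives only the quasi-multiplicative bound $\alpha_1(\underline a\,\underline b)\geq c\,\alpha_1(\underline a)\alpha_1(\underline b)$ for a \emph{single} split; iterating over all $k$ letters degrades the constant to $c^{k-1}$, so $\prod_i\alpha_1(a_i)$ is not uniformly comparable to $\alpha_1(\underline a)$ and the ``standard pressure computation'' does not pin the critical exponent at $d$ (it can genuinely move). Replace the identity by the inequality $d(2)\leq d$ and your proof is complete and matches the paper's.
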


\begin{proof}
Given the previous discussion, it suffices to show that 
\[
\sum_{i \in \Lambda} \alpha_2(i)^{d(2)/2} > 1.
\]
Indeed, the 1-bunching condition and the fact that $d \geq d(2)$ guarantees
\[
\sum_{i \in \Lambda} \alpha_2(i)^{d(2)/2} > \sum_{i \in \Lambda} \alpha_1(i)^{d(2)} \geq \sum_{i \in \Lambda} \alpha_1(i)^{d}
\]
and using submultiplicativity of the larger singular value function, this is at least 1 by the definition of $d$.
\end{proof}

\section{Proofs}

We will first prove the result for the K\"aenm\"aki measure.  We will assume $q \geq 2$ and observe that this is sufficient to prove the result for all $q \in [0,q_0)$ by virtue of the fact that the $L^q$-dimensions are non-increasing in $q$ and always bounded above by the box dimension and hence the affinity dimension of $F$.  This means that establishing $D^2(\mu) = d$ is enough to prove that $D^q(\mu) = d$ for $q \in [0,2)$.

Finally, in Section \ref{bernoulliproof} we adapt our proof of the K\"aenm\"aki measure case to give the result for general Bernoulli measures.

\subsection{The case: $q \geq 2$}\label{mainproofsection}

Let $\mu$ be the K\"aenm\"aki  measure and let $0<s<d$.  Let us consider the energy integral
\[
\mathcal I_{s}^q(\mu)=\int_{\underline a \in \Sigma}\left(\int_{\underline b \in\Sigma} \frac{1}{d(\underline a,\underline b)^s}d\mu(\underline b)\right)^{q-1}d\mu(\underline a)
\]
where $d(\underline a,\underline b)$ denotes the distance between the points $\pi(\underline a),\pi(\underline b)\in F$ coded by $\underline a$ and $\underline b$ respectively. We wish to show that $I_s^q(\mu)<\infty$ for $s<d(q)$.

\begin{lemma}\label{keyestimate}
Suppose that
\[
\iint_{\underline a, \underline b : \underline a \wedge \underline b = \phi} \sum_{n=0}^{\infty}\sum_{c_1\cdots c_n} \frac{\mu[c_1\cdots c_n]^q}{d( c_1\cdots c_n\underline a,  c_1\cdots c_n\underline b)^{s(q-1)}}d\mu(\underline a)d\mu(\underline b)<\infty
\]
for all $s<d(q)$. Then it follows that $I_s^q(\mu)<\infty$ for $s<d(q)$.
\end{lemma}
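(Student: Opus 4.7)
My plan is to bound $\mathcal{I}_s^q(\mu) = \int \Phi(\underline{a})^{q-1}\, d\mu(\underline{a})$, where $\Phi(\underline{a}) := \int d\mu(\underline{b})/d(\underline{a},\underline{b})^s$, by decomposing the inner integral according to the length $n = |\underline{a} \wedge \underline{b}|$ of the common prefix. Writing $\Phi(\underline{a}) = \sum_{n \geq 0} F_n(\underline{a})$ where $F_n(\underline{a})$ integrates $1/d(\underline{a},\underline{b})^s$ against those $\underline{b}$ with $|\underline{a} \wedge \underline{b}|=n$, the task reduces to controlling $\int F_n^{q-1}\, d\mu$ for each $n$ and then recombining. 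The case $q=2$ follows immediately from this decomposition, Fubini and the Gibbs property of Proposition~\ref{muProp}, so I focus on $q > 2$.

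For a fixed $n$, Hölder's inequality with conjugate exponents $q-1$ and $(q-1)/(q-2)$, applied against the indicator of the relevant cylinder, gives
\[
F_n(\underline{a})^{q-1} \ \leq \ \mu[\underline{a}|_n]^{q-2} \int_{\underline{b} \in [\underline{a}|_n],\ \underline{b}_{n+1} \neq \underline{a}_{n+1}} \frac{d\mu(\underline{b})}{d(\underline{a},\underline{b})^{s(q-1)}}.
\]
Integrating over $\underline{a}$, splitting the double integral into level-$n$ cylinders $[c]$ and applying the Gibbs property to both the $\underline{a}$ and $\underline{b}$ variables (after changing variables by $\sigma^n$) produces exactly the hypothesis form per $n$: setting $J_s(c) := \iint_{\underline{a}' \wedge \underline{b}' = \emptyset} d\mu^{\otimes 2}/d(c\underline{a}', c\underline{b}')^{s(q-1)}$, we obtain $\int F_n^{q-1}\,d\mu \leq C \sum_{|c|=n} \mu[c]^q J_s(c)$.

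To recombine the $F_n$, I would apply Jensen's inequality to $(\sum_n F_n)^{q-1}$ with the probability weights $p_n = (1-\rho)\rho^n$ for some $\rho \in (0,1)$ to be chosen. Since $x \mapsto x^{q-1}$ is convex for $q \geq 2$, this gives $\Phi^{q-1} \leq C_\rho \sum_n \rho^{-n(q-2)} F_n^{q-1}$; integrating and applying the previous bound yields
\[
\mathcal{I}_s^q(\mu) \ \leq \ C \sum_n \rho^{-n(q-2)} \sum_{|c|=n} \mu[c]^q J_s(c).
\]
The main obstacle here is the exponential factor $\rho^{-n(q-2)}$, which prevents a direct comparison with the hypothesis at the same value of $s$.

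To absorb this loss I would fix some $s' \in (s, d(q))$ and use the geometric bound $d(c\underline{a}', c\underline{b}') \leq \alpha_1(c) \cdot \mathrm{diam}(F) \leq C \lambda_1^n$, with $\lambda_1 := \max_{i \in \Lambda} \alpha_1(i) < 1$. Pointwise this yields $d(c\underline{a}',c\underline{b}')^{-s(q-1)} \leq C \lambda_1^{n(s'-s)(q-1)}\, d(c\underline{a}',c\underline{b}')^{-s'(q-1)}$, hence $J_s(c) \leq C \lambda_1^{n(s'-s)(q-1)}\, J_{s'}(c)$. Choosing $\rho$ so that $\rho^{-(q-2)} \lambda_1^{(s'-s)(q-1)} \leq 1$, which is possible since $\lambda_1^{(s'-s)(q-1)/(q-2)} < 1$, makes the combined geometric factor bounded by $1$ and gives $\mathcal{I}_s^q(\mu) \leq C' \sum_n \sum_{|c|=n} \mu[c]^q J_{s'}(c) < \infty$ by the hypothesis applied at $s' \in (s, d(q))$.
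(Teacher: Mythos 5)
Your proof is correct and follows essentially the same strategy as the paper: decompose the inner integral by the length of the common prefix, push the power $q-1$ through the sum and the integral using Jensen/H\"older with geometric weights, and absorb the resulting exponential loss by passing from $s$ to some $s'\in(s,d(q))$ via the uniform contraction bound $d(c\underline a',c\underline b')\lesssim \alpha_1(c)$. The only differences are organisational --- you apply H\"older over the cylinder $[\underline a|_n]$ where the paper applies the integral form of Jensen to a normalised conditional measure (these are the same inequality), and you perform the $s\to s'$ trade after the Jensen step rather than before --- so the two arguments are interchangeable.
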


The proof of this lemma is a little delicate, but it is distinct from the main new ideas of this article and so is postponed until Section \ref{technicallemma}.

Observe that the linear map $T_{c_1}\circ \cdots \circ T_{c_n}$ contracts straight line segments by an amount depending only on the angle of the straight line segment. Let 
\[\lambda_{c_1\cdots c_n}(\theta):= \frac{d(c_1\cdots c_n\underline a,c_1\cdots c_n\underline b)}{d(\underline a,\underline b)}\]
for any $\underline a,\underline b$ with $\theta=\theta(\underline a,\underline b)$ defined to be the angle of the line connecting $\pi(\underline a)$ and $\pi(\underline b)$. This is well defined and depends only on $\theta(\underline a,\underline b)$ rather than the sequences $\underline a$ and $\underline b$ themselves.

Define a function $r_s^q:\mathbb P\mathbb R^1\to \mathbb R$  by
\[
r_s^q(\theta):=\sum_{n=0}^{\infty} \sum_{c_1\cdots c_n}(\mu[c_1\cdots c_n])^q(\lambda_{c_1\cdots c_n}(\theta))^{-s(q-1)}.
\]
The separation assumption (S) guarantees that the number
\[
m:= \inf \{d(\underline a,\underline b) : \underline a,\underline b\in\Sigma : \underline a \wedge\underline b=\phi\}
\]
is strictly positive. We can rewrite the integral in Lemma \ref{keyestimate} as
\begin{align*}
& \hspace{-10mm}  \int\int_{\underline a,\underline b\in\Sigma : \underline a \wedge\underline b=\phi} \frac{1}{d(\underline a,\underline b)^{s(q-1)}}r_s^q(\theta(\underline a,\underline b))d\mu(\underline a)d\mu(\underline b)\\
&\leq \frac{1}{m^{s(q-1)}}\int\int_{\underline a,\underline b\in\Sigma : \underline a \wedge\underline b=\phi} r_s^q(\theta(\underline a,\underline b))d\mu(\underline a)d\mu(\underline b).
\end{align*}
Thus our goal is now to show that  $r_s^q(\theta)$ is uniformly bounded under our conditions, which in turn shows that the energy integral $\mathcal I_{s}^q(\mu)$ is finite and thus by letting $s \to d$ we conclude that $\underline{D}^q(\mu) \geq d$.  Combined with Falconer's upper bound and the observation that for the K\"aenm\"aki measure $d(q) = d$ for all $q\geq 0$, this proves that $D^q(\mu)  = d$ as required.

\subsubsection{Bounding  $r_s^q(\theta)$ uniformly}

We begin by noting a couple of facts about the contraction rates $\lambda_{c_1\cdots c_n}$. Firstly, for $\theta\in\mathbb{PR}^1, \underline a\in\Sigma$ and $1\leq m\leq n$ we have
\begin{align*}
\lambda_{c_n\cdots c_1}(\theta)&=\frac{d(c_n\cdots c_1\underline a, c_n\cdots c_1\underline b)}{d(c_m\cdots c_1\underline a,c_m\cdots c_1\underline b)}\frac{d(c_m\cdots c_1\underline a,c_m\cdots c_1\underline b)}{d(\underline a,\underline b)}\\
&= \lambda_{c_n\cdots c_{m+1}}(\phi_{c_1\cdots c_m}^{-1}(\theta))\lambda_{c_m\cdots c_1}(\theta).
\end{align*}
Here we used that
\[\phi_{c_1\cdots c_m}^{-1}=(\phi_{c_1}\circ\cdots \circ \phi_{c_m})^{-1}=\phi_{c_m}^{-1}\circ\cdots\circ \phi_{c_1}^{-1}.
\]
Now let $D_{c_1\cdots c_n}$ denote the $n$th level ellipse coded by $c_1\cdots c_n$ and note that
\[
\alpha_2(c_1\cdots c_n)\leq \lambda_{c_1\cdots c_n}(\theta)\leq \alpha_1(c_1\cdots c_n).
\]
When $\theta$ is chosen to be the angle of the minor (resp. major) axis of $D_{c_1\cdots c_n}$ then the left hand (resp. right hand) inequality becomes an equality. 

\begin{lemma}\label{C3}
There exists a constant $C_3>1$ such that
\[
(\alpha_1(a_1\cdots a_n))^{-s}\leq (\lambda_{a_1\cdots a_n}(\theta))^{-s}\leq C_3 (\alpha_1(a_1\cdots a_n))^{-s}
\]
for all $\theta\in\mathcal Q_1, a_1\cdots a_n\in\Lambda^n$.
\end{lemma}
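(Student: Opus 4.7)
\textbf{Plan for Lemma \ref{C3}.} The plan is to use the singular value decomposition together with the positivity hypothesis (P). Writing $A_{a_1\cdots a_n}=U\Sigma V^T$ with $\Sigma=\mathrm{diag}(\alpha_1,\alpha_2)$ and letting $v_1,v_2$ be the columns of $V$, any unit vector at angle $\theta$ decomposes as $\cos\psi\, v_1+\sin\psi\, v_2$, where $\psi\in[0,\pi/2]$ is the angle between the line at angle $\theta$ and the first right singular direction $v_1$. Hence
\[
\lambda_{a_1\cdots a_n}(\theta)^2 \;=\; \alpha_1^2\cos^2\psi+\alpha_2^2\sin^2\psi.
\]
The left inequality of the lemma is the trivial bound $\lambda\le\alpha_1$. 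For the right inequality, it suffices to show that $\cos\psi\ge c>0$ uniformly for $\theta\in\mathcal Q_1$ and all words $a_1\cdots a_n$; this will in turn follow once I show that $v_1(A_{a_1\cdots a_n})$ lies in a uniform compact subset of $\mathrm{int}(\mathcal Q_1)$.

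The key observation is that $v_1(A)=u_1(A^T)$ for any invertible $A$. Setting $A=A_{a_1\cdots a_n}$ and noting $A^T=A_{a_n}^T\cdots A_{a_1}^T$, I would write
\[
v_1(A_{a_1\cdots a_n})\;=\;\frac{A_{a_n}^T\bigl(A_{a_{n-1}}^T\cdots A_{a_1}^T\, v_1(A_{a_1\cdots a_n}^T)\bigr)}{\|\,\cdot\,\|}.
\]
By (P) each $A_i^T$ is strictly positive, so $A_{a_1\cdots a_n}A_{a_1\cdots a_n}^T$ is strictly positive and Perron--Frobenius places $v_1(A_{a_1\cdots a_n}^T)\in\mathcal Q_1$. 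Since each $A_i^T$ preserves $\mathcal Q_1$, the bracketed vector also lies in $\mathcal Q_1$, and its image under the outer $A_{a_n}^T$ lies in the uniform compact set
\[
K^\ast\;:=\;\bigcup_{i\in\Lambda}A_i^T(\mathcal Q_1)\;\subset\;\mathrm{int}(\mathcal Q_1),
\]
where compactness follows from finiteness of $\Lambda$ and the strict containment in the interior follows from strict positivity of each $A_i^T$.

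Since $K^\ast\subset\mathrm{int}(\mathcal Q_1)$ is compact, there is $\delta>0$ so that every line in $K^\ast$ has angle in $[\delta,\pi/2-\delta]$. For any $\theta\in\mathcal Q_1=[0,\pi/2]$ the angle $\psi$ between the line at angle $\theta$ and $v_1(A_{a_1\cdots a_n})\in K^\ast$ is then at most $\pi/2-\delta$, so $\cos\psi\ge\sin\delta$. This yields $\lambda_{a_1\cdots a_n}(\theta)\ge\sin(\delta)\,\alpha_1(a_1\cdots a_n)$ and the lemma follows with $C_3=(\sin\delta)^{-s}$ (uniformly bounded since the relevant $s$ lies in the bounded interval $[0,d]\subseteq[0,2]$).

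The main obstacle is the \emph{uniformity} of the compact containment $v_1\in K^\ast\subset\mathrm{int}(\mathcal Q_1)$: a direct Perron--Frobenius argument for $A_{a_1\cdots a_n}^T A_{a_1\cdots a_n}$ only gives $v_1\in\mathcal Q_1$, without any a priori control over how close to $\partial\mathcal Q_1$ the vector might drift as $n\to\infty$. The device of peeling off a single final factor $A_{a_n}^T$ forces the output into the finite union $\bigcup_i A_i^T(\mathcal Q_1)$, which is compact, contained in the open quadrant, and, crucially, independent of both the word and its length.
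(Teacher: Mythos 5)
Your proof is correct and follows essentially the same route as the paper: both reduce the upper bound to showing that the expanding singular direction of $A_{a_1\cdots a_n}$ lies in a fixed compact cone strictly inside $\mathcal Q_1$, so that its angle to any $\theta\in\mathcal Q_1$ is uniformly bounded away from $\pi/2$ and $\lambda_{a_1\cdots a_n}(\theta)\geq c\,\alpha_1(a_1\cdots a_n)$. The paper merely asserts this cone containment as a consequence of positivity (and phrases the cosine bound in terms of the major axis of the image ellipse rather than the right singular direction $v_1$), whereas you supply the uniformity argument via Perron--Frobenius and peeling off the final factor $A_{a_n}^T$ --- a worthwhile addition, since the Perron direction of a single strictly positive matrix can lie arbitrarily close to $\partial\mathcal Q_1$.
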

\begin{proof}
Since each of our matrices $A_i$ is strictly positive, the directions of the long axes of the ellipses $D_{a_1\cdots a_n}$ all lie in some cone contained strictly inside $\mathcal Q_1$. In particular, there exists some closed interval $K\subset (-\frac{\pi}{2},\frac{\pi}{2})$ such that the angle between the long axis of $D_{a_1\cdots a_n}$ and $\theta$ lies in $K$ for any $\theta\in\mathcal Q_1$.

Then, since $\lambda_{a_1\cdots a_n}(\theta)\geq \alpha_1(a_1\cdots a_n)\cos(\theta')$ where $\theta'$ is the angle between $\theta$ and the major axis of $D_{a_1\cdots a_n}$, we are done.
\end{proof}

We split words $a_n\cdots a_1$ into two parts according to how close $a_1\cdots a_n$ is to being a code of $\theta$. More precisely, we write $a_n\cdots a_1=e_n\cdots e_{m+1}c_m\cdots c_1$ where $m$ is the largest integer for which $\theta\in\phi_{a_1\cdots a_m}(\mathcal Q_2)$. Then
\begin{align*}
r_s^q(\theta)& = \sum_{n=0}^{\infty}\sum_{a_n\cdots a_1}(\mu[a_n\cdots a_1])^q(\lambda_{a_n\cdots a_1}(\theta))^{-s(q-1)} \\
&\leq C_2^{2q}\sum_{m=0}^{\infty}\sum_{c_1\cdots c_m \text{ coding } \theta} \sum_{n=m+1}^{\infty} \sum_{e_{m+1}\cdots e_n} \\
&\,  (\lambda_{c_m\cdots c_1}(\theta))^{-(q-1)s}(\lambda_{e_{n}\cdots e_{m+1}}(\phi_{c_m\cdots c_1}^{-1}(\theta))^{-(q-1)s}(\mu[c_m\cdots c_1])^q(\mu[e_{n}\cdots e_{m+1}])^q\\
& \leq C_2^{2q}C_3^{q-1} \sum_{m=0}^{\infty}\sum_{c_1\cdots c_m \text{ coding } \theta} \sum_{n=m+1}^{\infty} \sum_{e_{m+1}\cdots e_n}  \\
&\,  (\alpha_2(c_m\cdots c_1))^{-(q-1)s}(\alpha_1(e_{n}\cdots e_{m+1}))^{-(q-1)s}(\mu[c_m\cdots c_1])^q(\mu[e_{n}\cdots e_{m+1}])^q.
\end{align*}
Here the first inequality came from the quasi-Bernoulli property of $\mu$, see Proposition \ref{muProp}. For the second inequality we used the fact that  $\lambda_{c_m\cdots c_1}(\theta)\geq \alpha_2(c_m\cdots c_1)$ to deal with the terms $c_m\cdots c_1$ corresponding to codings of $\theta$. Finally we used Lemma \ref{C3} and the fact that the angles $\phi_{e_k\cdots e_{m+1}c_m\cdots c_1}^{-1}(\theta)$ are not in $\mathcal Q_2$.

Dropping constants (as we may) the right hand side of the above inequality can be factorised, giving 
\begin{align*}
r_s^q(\theta)&\leq\sum_{m=0}^{\infty}\sum_{c_1\cdots c_m \text{ coding } \theta}(\alpha_2(c_m\cdots c_1))^{-(q-1)s}(\mu[c_m\cdots c_1])^q \\
&\, \qquad \sum_{n=m+1}^{\infty} \sum_{e_{m+1}\cdots e_{n}}(\alpha_1(e_{n}\cdots e_{m+1}))^{-(q-1)s}(\mu[e_{n}\cdots e_{m+1}])^q\\
&=\left(\sum_{m=0}^{\infty}\sum_{c_1\cdots c_m \text{ coding } \theta}(\alpha_2(c_m\cdots c_1))^{-(q-1)s}(\mu[c_m\cdots c_1])^q\right)\\
&\, \qquad  \times\left(\sum_{j=1}^{\infty} \sum_{e_{1}\cdots e_j}(\alpha_1(e_{1}\cdots e_j))^{-(q-1)s}(\mu[e_{1}\cdots e_j])^q\right)
\end{align*}
The second equality here is just a relabelling of $e_{m+1}\cdots e_n$ since the quantity $n$ is no longer relevant. The two terms in this multiplication are independent. The second term will be easily bounded by applying the definition of $d(q)$ and the first term will be bounded using our bunching condition. 

First observe that the second term is always uniformly bounded.  Since $s < d \leq 1$, $\alpha_1(\underline{a})^s = \phi^s(\underline{a})$ for any finite word $\underline{a}$, and so
\begin{align*}
\sum_{j=1}^{\infty} \sum_{e_{1}\cdots e_j}(\alpha_1(e_{1}\cdots e_j))^{-(q-1)s}(\mu[e_{1}\cdots e_j])^q& = \sum_{j=1}^{\infty} \sum_{e_{1}\cdots e_j}\phi^s(e_1 \cdots e_j)^{1-q}(\mu[e_{1}\cdots e_j])^q
\end{align*}
which is finite precisely when $s<d(q) = d$, by the definition of $d(q)$.  The remainder of the argument comes down to bounding the first term.  This is where we need more restrictive assumptions on the measure in the form of bunching conditions.  Using $s  < d \leq 1$ we have

\begin{align*}
\sum_{m=0}^{\infty}\sum_{c_1\cdots c_m \text{ coding } \theta}(\alpha_2(c_m\cdots c_1))^{-(q-1)s}(\mu[c_m\cdots c_1])^q \\
 &  \hspace{-60mm}\leq  \sum_{m=0}^{\infty}\sum_{c_1\cdots c_m \text{ coding } \theta}(\alpha_2(c_m\cdots c_1))^{-(q-1)s} C_2^q \alpha_1(c_m\cdots c_1)^{dq}  \\
& \hspace{-60mm}\leq C_2^q \sum_{m=0}^{\infty}\sum_{c_1\cdots c_m \text{ coding } \theta} \prod_{k=1}^m \frac{ \alpha_1(c_k)^{dq}}{\alpha_2(c_k)^{(q-1)s}}   \\
& \hspace{-60mm}\leq C_2^q \sum_{m=0}^{\infty} \left( \gamma \, \max_{i \in \Lambda}\frac{ \alpha_1(i)^{dq}}{\alpha_2(i)^{(q-1)s}}\right)^m  
\end{align*}

This is a geometric series which sums whenever $\gamma \alpha_{1}(i)^{d q} < \alpha_{2}(i)^{(q-1)s}$ for all $i \in \Lambda$, which is guaranteed by our bunching condition since $s< d$.

\subsubsection{Why we are restricted to $d \leq 1$?}

In this section we discuss the case where the affinity dimension is strictly larger than 1. Our techniques seem to generalise quite naturally under the assumption of a slightly stronger bunching condition however, unfortunately, this bunching condition is never satisfied, and so our techniques do not yield information about the dimension of measures of Hausdorff dimension greater than one.

The argument is similar to the previous section until the point where we are led to bound two independent sums.  We only need to consider the `difficult' first term, since the second term is also easily shown to be bounded in this case.  Assuming $2 \geq d>s > 1$ we have
\begin{align*}
\sum_{m=0}^{\infty}\sum_{c_1\cdots c_m \text{ coding } \theta}(\alpha_2(c_m\cdots c_1))^{-(q-1)s}(\mu[c_m\cdots c_1])^q \\
 &  \hspace{-80mm}\leq  \sum_{m=0}^{\infty}\sum_{c_1\cdots c_m \text{ coding } \theta}(\alpha_2(c_m\cdots c_1))^{-(q-1)s} C_2^q \alpha_1(c_m\cdots c_1)^{q} \alpha_2(c_m\cdots c_1)^{q(d-1)} \\
& \hspace{-80mm}\leq C_2^q \sum_{m=0}^{\infty}\sum_{c_1\cdots c_m \text{ coding } \theta} \prod_{k=1}^m \frac{ \alpha_1(c_k)^{q}}{\alpha_2(c_k)^{q-s}}   \\
& \hspace{-80mm}\leq C_2^q \sum_{m=0}^{\infty} \left( \gamma \, \max_{i \in \Lambda}\frac{ \alpha_1(i)^{q}}{\alpha_2(i)^{q-s}}\right)^m  
\end{align*}

This is a geometric series which sums whenever $\gamma \alpha_{1}(i)^{q}  \alpha_{2}(i)^{s-q} < 1$ for all $i \in \Lambda$.  Although this condition looks natural, curiously it appears to be vacuous by the following heuristic reasoning.

First note that the maps $\phi_{\underline{a}}$ contract the negative quadrant of projective space by $\frac{\alpha_2(\underline{a})}{\alpha_1(\underline{a})}$ (up to constants). A good $k$th level estimate for $\gamma$ is given by counting the the maximum number of mutually overlapping intervals in our IFS on projective space, which by the pigeon hole principal, gives
\[
\gamma^k \geq \sum_{\underline{a} \in \Lambda^k}\frac{\alpha_2(\underline{a})}{\alpha_1(\underline{a})},
\]
again up to constants.  Therefore a good estimate for $\gamma$ is
\[
\liminf_{k \to \infty} \left(\sum_{\underline{a} \in \Lambda^k}\frac{\alpha_2(\underline{a})}{\alpha_1(\underline{a})}\right)^{1/k}
\]
which yields
\[
1 \geq \liminf_{k \to \infty} \left(\sum_{\underline{a} \in \Lambda^k}\gamma^{-k} \frac{\alpha_2(\underline{a})}{\alpha_1(\underline{a})}\right)^{1/k}.
\]
Recall the `bunching condition' required above, with $q=2$: $ \gamma \alpha_{1}(i)^{2}  \alpha_{2}(i)^{s-2} < 1$ (for all $i \in \Lambda$).  Using sub- and super-additivity of the larger and smaller singular values respectively  implies that
\[
\alpha_{1}(\underline{a})^{2}  \alpha_{2}(\underline{a})^{s-2} \leq \prod_{l=1}^{k} \frac{\alpha_1(a_l)^2}{ \alpha_{2}(a_l)^{2-s}} < \gamma^{-k}
\]
and combining these two estimates yields
\[
1 \geq \liminf_{k \to \infty} \left(\sum_{\underline{a} \in \Lambda^k}  \alpha_1(\underline{a}) \alpha_2(\underline{a})^{s-1}\right)^{1/k}
\]
which implies $s \geq d$ which is a contradiction.

\subsection{Proof of Lemma \ref{keyestimate}} \label{technicallemma}
Recall that
\[
\mathcal I_{s}^q(\mu):=\int_{\underline a \in \Sigma}\left(\int_{\underline b \in\Sigma} \frac{1}{d(\underline a,\underline b)^s}d\mu(\underline b)\right)^{q-1}d\mu(\underline a).
\]

We rewrite the term $\underline b$ (in the inner integral) in terms of a part that agrees with $\underline a$ (in the outer integral) and a part that differs, setting $\underline b=a_1\cdots a_n\underline b'$ for the longest word $a_1\cdots a_n$ possible. This gives

\[
\mathcal I_s^q(\mu)=\int_{\underline a \in \Sigma}\left(\sum_{n=0}^{\infty}\int_{\underline b' \in\Sigma:\underline b' \wedge \sigma^n\underline a = \phi} \frac{\mu[a_1\cdots a_n]}{d(\underline a,a_1\cdots a_n\underline b')^s}d\mu(\underline b')\right)^{q-1}d\mu(\underline a)
\]

It is sufficient to consider integrals with some extra decay, which are easier to manipulate using Jensen's inequality. For $\beta \in (0,1)$ define
\begin{eqnarray*}
\mathcal I_s^{\beta,q}(\mu) &:=& \int_{\underline a \in \Sigma}\left(\dfrac{\sum_{n=0}^{\infty}\beta^n\int_{\underline b' \in\Sigma:\underline b' \wedge \sigma^n\underline a = \phi} \frac{\mu[a_1\cdots a_n]}{d(\underline a,a_1\cdots a_n\underline b')^{s'}}d\mu(\underline b')}{\sum_{n=1}^{\infty}\beta^n}\right)^{q-1}d\mu(\underline a).
\end{eqnarray*}
\begin{lemma}
Suppose that $\mathcal I_{s'}^{\beta,q}(\mu)<\infty$ for all $s'<d(q), \beta<1$. Then $\mathcal I_s^q(\mu)<\infty$ for all $s<d(q)$.
\end{lemma}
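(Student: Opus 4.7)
The plan is to compare the two integrals termwise in $n$ and exploit the uniform geometric decay of cylinder diameters. Given $s<d(q)$, fix $s'\in(s,d(q))$, set $\alpha_{\max}:=\max_{i\in\Lambda}\alpha_1(i)<1$, and choose $\beta:=\alpha_{\max}^{s'-s}\in(0,1)$. Writing
\[
I_n(\underline a;t):=\int_{\underline b':\underline b'\wedge\sigma^n\underline a=\phi}\frac{\mu[a_1\cdots a_n]}{d(\underline a,a_1\cdots a_n\underline b')^{t}}\,d\mu(\underline b'),
\]
the target is the pointwise estimate $I_n(\underline a;s)\leq C\beta^n I_n(\underline a;s')$ with $C$ independent of $n$ and $\underline a$.

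The geometric input is that both $\pi(\underline a)$ and $\pi(a_1\cdots a_n\underline b')$ lie in the $n$th-level cylinder $T_{a_1\cdots a_n}(F)$, whose diameter is at most $C\alpha_1(a_1\cdots a_n)\leq C\alpha_{\max}^n$ by submultiplicativity of the top singular value. Factoring $d^{-s}=d^{s'-s}\cdot d^{-s'}$ and pulling $d^{s'-s}\leq C\alpha_{\max}^{n(s'-s)}=C'\beta^n$ out of the integration in $\underline b'$ (this is where the direction $s'>s$ is used) gives exactly the claimed bound on $I_n(\underline a;s)$.

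Summing in $n$ yields $\sum_n I_n(\underline a;s)\leq C\sum_n\beta^n I_n(\underline a;s')$. Multiplying and dividing the right-hand side by $\sum_{n=1}^\infty\beta^n=\beta/(1-\beta)$ reproduces the normalised average inside the brackets defining $\mathcal I_{s'}^{\beta,q}(\mu)$. Since the exponent $q-1>0$ preserves the inequality under raising to the power, integrating over $\underline a$ gives
\[
\mathcal I_s^q(\mu)\leq C^{q-1}\left(\frac{\beta}{1-\beta}\right)^{q-1}\mathcal I_{s'}^{\beta,q}(\mu)<\infty
\]
by the hypothesis of the lemma for this choice of $s'$ and $\beta$.

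I do not expect any real obstacle here: the only ingredient beyond rearrangement is the cylinder diameter bound, which is standard. The artificial weight $\beta^n$ in $\mathcal I_{s'}^{\beta,q}$ is tuned precisely to absorb the factor $d^{s'-s}\leq C\alpha_{\max}^{n(s'-s)}$, and the purpose of passing to the normalised version is that subsequent steps will apply Jensen's inequality to the convex function $x\mapsto x^{q-1}$ with the probability weights $\beta^n/\sum_m\beta^m$; the present lemma merely confirms that this normalisation loses no information in bounding $\mathcal I_s^q$.
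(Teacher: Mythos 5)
Your argument is correct and follows essentially the same route as the paper: choose $\beta$ comparable to $\left(\max_i\alpha_1(i)\right)^{s'-s}$, use the cylinder diameter bound $d(\underline a,a_1\cdots a_n\underline b')\leq C\alpha_{\max}^n$ to absorb $d^{s'-s}$ into the weight $\beta^n$, and then sum, renormalise and raise to the power $q-1$. The only cosmetic difference is that the paper normalises $\operatorname{diam}F=1$ to avoid the constant $C$ you carry through.
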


\begin{proof} Let $s<d(q)$ and assume without loss of generality that the diameter of $F$ is 1.  For $s'\in (s,d(q))$ choose $\beta$ such that
\[
\left(\max_i \alpha_1(i) \right)^{s'-s} \leq  \beta < 1.
\]
Then for any $\sigma^n(\underline a)\wedge\underline b'=\phi$ and $n$ large we have
\begin{eqnarray*}
\frac{\beta^n}{d(\underline a,a_1\cdots a_n\underline b')^{s'}}&=&\left(\frac{\beta^n}{d(\underline a,a_1\cdots a_n\underline b')^{s'-s}}\right)\frac{1}{d(\underline a,a_1\cdots a_n\underline b')^s} \\ \\
&\geq&\left(\frac{\beta^n}{(\max_i \alpha_1(i)^n )^{s'-s}}\right)\frac{1}{d(\underline a,a_1\cdots a_n\underline b')^s} \\ \\
&\geq& \frac{1}{d(\underline a,a_1\cdots a_n\underline b')^s}.
\end{eqnarray*}
Integrating over all $\underline a, \underline b$, we have shown that
\[
\mathcal I_s^q(\mu) \ \leq  \ \left(\sum_{n=1}^{\infty}\beta^n\right)^{q-1}  \mathcal I_{s'}(\mu)^{\beta,q} \ < \ \infty,
\]
completing the proof of the lemma. 
\end{proof}
Now we have
\begin{eqnarray*}
& &\left(\dfrac{\sum_{n=0}^{\infty}\beta^n\int_{\underline b' \in\Sigma:\underline b' \wedge \sigma^n\underline a = \phi} \frac{\mu[a_1\cdots a_n]}{d(\underline a,a_1\cdots a_n\underline b')^{s'}}d\mu(\underline b')}{\sum_{n=1}^{\infty}\beta^n}\right)^{q-1}\\
&<&\dfrac{\sum_{n=0}^{\infty}\beta^n\left(\int_{\underline b' \in\Sigma:\underline b' \wedge \sigma^n\underline a = \phi} \frac{\mu[a_1\cdots a_n]}{d(\underline a,a_1\cdots a_n\underline b')^{s'}}d\mu(\underline b')\right)^{q-1}}{\sum_{n=1}^{\infty}\beta^n}
\end{eqnarray*}
using Jensen's inequality in the form $\left(\frac{\sum a_i x_i}{\sum a_i}\right)^{q-1}\leq \frac{\sum a_i (x_i)^{q-1}}{\sum a_i}$ for $q>2$. 

Now using the integral form of Jensen's inequality, we move the $q-1$ above inside the integral, giving
\[
\mathcal I_s^{\beta,q}(\mu)\leq C'\int_{\underline a\in\Sigma}\dfrac{\sum_{n=0}^{\infty}\beta^n\int_{\underline b' \in\Sigma:\underline b' \wedge \sigma^n\underline a = \phi} \left(\frac{\mu[a_1\cdots a_n]}{d(\underline a,a_1\cdots a_n\underline b')^{s'}}\right)^{q-1}d\mu(\underline b')}{\sum_{n=1}^{\infty}\beta^n}d\mu(\underline a)
\]
for some universal constant $C'>0$.  If the integrals here were over sets of measure $1$, Jensen's inequality would work precisely and the constant $C'$ would not be necessary. We have $\mu\{\underline b':\sigma^n\underline a\wedge \underline b'=\phi\}<1$ but since the measure of these sets is uniformly bounded below we can renormalise at the expense of a universal constant. In particular, for any $n$ we have
\[
\mu\{\underline b':\sigma^n\underline a\wedge \underline b'=\phi\} \geq  1- \max_{i \in \Lambda} \mu[i]
\]
which is uniformly bounded away from 0, using the Gibbs property.

Since we are interested only in showing finiteness, without loss of generality we may discard the denominator and the constant $C'$. We write $\underline a'=\sigma^n \underline a$ and reorder integration to get
\begin{eqnarray*}
 \mathcal I_s^{\beta,q}(\mu)&<&\sum_{n=0}^{\infty}\beta^n\sum_{a_1\cdots a_n}\mu[a_1\cdots a_n] \\ \\
&\,& \hspace{-20mm}  \int_{\underline a'\in\Sigma}\int_{\underline b'\in\Sigma:\underline a'\wedge\underline b'=\phi}\frac{\mu[a_1\cdots a_n]^{q-1}}{d(a_1\cdots a_n\underline a', a_1\cdots a_n \underline b')^{s(q-1)}} d\mu(\underline b')d\mu(\underline a').
\end{eqnarray*}
Finally, discarding $\beta$, since it only decreases things, using Fubini's theorem and renaming $a_1\cdots a_n$ as $c_1\cdots c_n$ gives

\[
\mathcal I_s^{\beta,q}(\mu)\leq \iint_{\underline a, \underline b : \underline a \wedge \underline b = \phi} \sum_{n=0}^{\infty}\sum_{c_1\cdots c_n} \frac{\mu[c_1\cdots c_n]^q}{d( c_1\cdots c_n\underline a,  c_1\cdots c_n\underline b)^{s(q-1)}}d\mu(\underline a)d\mu(\underline b).
\]
Since it is enough to show that $\mathcal I_s^{\beta,q}(\mu)$ is finite, the proof of Lemma \ref{keyestimate} is complete.

\subsection{Bernoulli measures} \label{bernoulliproof}

In this section we let $\mu$ be a Bernoulli measure (a self-affine measure). Bernoulli measures clearly satisfy the quasi-Bernoulli property, and so the vast majority of the previous argument also goes through in this setting.  Note that the `simple term' is bounded for any $s< d(q)$.  The only real difference comes when trying to bound the `difficult term', which involved bunching conditions and properties of the measure.  In particular, we must bound
\[
\sum_{m=1}^{\infty}\sum_{c_1\cdots c_m \text{ coding } \theta}(\alpha_2(c_m\cdots c_1))^{-(q-1)s}(\mu[c_m\cdots c_1])^q.
\]
Here the behaviour is the same for any value of $s \in (0,2]$ and a similar argument yields:
\begin{align*}
\sum_{m=0}^{\infty}\sum_{c_1\cdots c_m \text{ coding } \theta}(\alpha_2(c_m\cdots c_1))^{-(q-1)s}(\mu[c_m\cdots c_1])^q \\
 &  \hspace{-60mm} =  \sum_{m=0}^{\infty}\sum_{c_1\cdots c_m \text{ coding } \theta}(\alpha_2(c_m\cdots c_1))^{-(q-1)s} \left( p(c_m)\cdots p(c_1) \right)^{q}  \\
& \hspace{-60mm}\leq  \sum_{m=0}^{\infty}\sum_{c_1\cdots c_m \text{ coding } \theta} \prod_{k=1}^m \frac{ p(c_k)^{q}}{\alpha_2(c_k)^{(q-1)s}}   \\
& \hspace{-60mm}\leq\sum_{m=0}^{\infty} \left( \gamma \, \max_{i \in \Lambda}\frac{ p(i)^{q}}{\alpha_2(i)^{(q-1)s}}\right)^m  
\end{align*}

This is a geometric series which sums whenever $\gamma \, p(i)^{q} < \alpha_{2}(i)^{(q-1)s}$ for all $i \in \Lambda$, which is guaranteed by our metric bunching condition, provided $s<d(q)$.  The rest of the argument goes through as before and is omitted.

\begin{centering}

\textbf{Acknowledgements}\\
The authors were financially supported by an LMS Scheme 4 \emph{Research in Pairs} grant. TK also acknowledges financially support from the EPSRC grant EP/K029061/1. The authors thank the Universities of Manchester and St Andrews for hosting the research visits which led to this work and Kenneth Falconer and Antti K\"aenm\"aki for helpful discussions. 

\end{centering}

\end{document}